\newtheorem{theorem}{Theorem}
\newtheorem{lemma}{Lemma}
\theoremstyle{definition}
\newtheorem{definition}{Definition}
\theoremstyle{remark}
\newtheorem{remark}{Remark}
\theoremstyle{definition}
\newcommand{\Ker}{\mbox{Ker}}
\begin{document}

\title[Sturm-Liouville operators]
    {Sturm-Liouville operators with matrix distributional coefficients}

\author{Alexei Konstantinov}
\address{Taras Shevchenko National University of Kyiv, 64 Volodymyrs'ka, Kyiv, 01601, Ukraine}
\email{konstant12@yahoo.com}

\author{Oleksandr Konstantinov}
\address{Livatek Ukraine LLC, 42 Holosiivskyi Ave., Kyiv, 03039, Ukraine}
\email{iamkonst@ukr.net}

\subjclass[2010]{34L40, 34B08, 47A10}

\keywords{Sturm-Liouville problem, matrix quasi-differential
operator, singular coefficients, resolvent approximation,
self-adjoint extension.}

\begin{abstract}
 The paper deals with the  singular Sturm-Liouville expressions
  $$l(y) = -(py')' + qy$$
  with the matrix-valued coefficients $p,q$ such that
  $$q=Q', \quad  p^{-1},\,\, p^{-1}Q, \,\, Qp^{-1}, \,\, Qp^{-1}Q \in L_1,
  $$
  where the derivative of the function $Q$ is
  understood in the sense of distributions. Due to a suitable
  regularization, the corresponding operators are correctly defined
  as quasi-differentials. Their resolvent convergence is
  investigated and all self-adjoint, maximal dissipative, and maximal accumulative
  extensions
  are described in terms of
  homogeneous boundary conditions of the canonical form.
\end{abstract}

\maketitle

\section{Introduction}

Many problems of mathematical physics lead to the study of Schr\"odinger-type operators with strongly singular (in particular distributional) potentials, see the monographs \cite{Albeverio,  AlbeKur}  and the more recent papers \cite{EGNST, EGNT, KM, M} and references therein.
It should be noted that the case of very  general
singular Sturm-Liouville operators defined in terms of appropriate
quasi-derivatives
has been considered in \cite{BE} (see also the book \cite{EM} and earlier discussions of quasi-derivatives in \cite{Shin, Zettl}).
Higher-order quasi-differential operators with matrix-valued valued singular coefficients were studied in \cite{Frent82, Frent99, MoZettl95, W}.

The paper \cite{S-Sh} started a new
approach for study of  one-dimensional Schr\"odinger  ope\-ra\-tors
with distributional
potential coefficients in connection with such areas as extension theory, resolvent convergence, spectral theory and inverse spectral theory.
The important  development was achieved in
\cite{GorMih2} (see also \cite{GMP12, Gor}), where it was considered the case of  Sturm-Liouville  operators generated by the differential expression
\begin{equation}
\label{vyraz} l(y) = -(py')'(t) + q(t)y(t), \quad t \in \mathcal{J}
\end{equation}
with singular distributional coefficients on a finite interval
$\mathcal{J} := (a, b)$. Namely it was assumed that
\begin{equation}\label{GM cond}
q = Q', \quad 1/p, \,\,  Q/p, \,\, Q^2/p \in L_1 (\mathcal{J}, \mathbb{C}),
\end{equation}
where the derivative of  $Q$ is understood in the sense
of distributions. The  more general class of second order quasi-differential operators was recently studied in \cite{M}.  In \cite{GMP12, GM11}  two-term singular  differential operators
\begin{equation}
\label{2term}
l(y) = i^my^{(m)}(t) + q(t)y(t), \quad t \in \mathcal{J},\quad m \ge 2
\end{equation}
with distributional coefficient $q$ were investigated. The case of matrix operators of the form (\ref{2term}) was considered in \cite{K05}.
Mention also  \cite{MS} where the deficiency indices  of  matrix  Sturm-Liouville operators with distributional coefficients on a half-line were studied.

The purpose of the present paper is to extend the results of \cite{GorMih2} to the
matrix Sturm-Liouville differential expressions.
In Section 2 we  give a
regularization of the formal differential expression (\ref{vyraz})
under a  matrix analogue of assumptions (\ref{GM
cond}). The question of norm resolvent convergence of such singular matrix  Sturm-Liouville operators is studied in Section 3.
In Section 4 we consider  the case of the symmetric minimal operator and
describe all its self-adjoint,  maximal dissipative, and maximal accumulative extensions. In addition, we study in details the case of  separated boundary conditions.

\section{Regularization of singular expression}
For positive integer $s$, denote by $M_s\equiv\mathbb{C}^{s\times s}$  the vector space of
$s\times s$ matrices with complex coefficients.
Let $\mathcal{J}:=(a,b)$ be a finite interval. Consider Lebesgue measurable matrix functions $p$, $Q$ on $\mathcal{J}$ into $M_s$ such
that $p$ is  invertible  almost everywhere. In what follows we shall always assume that
\begin{equation}\label{KM cond}
 p^{-1}, \, \,  p^{-1}Q , \, \,  Qp^{-1}, \, \,  Qp^{-1}Q \in L_1(\mathcal{J},M_s) .
\end{equation}
This condition should be considered as a matrix (noncommutative)
analogue of the assumption (\ref{GM cond}).
In particular (\ref{KM cond})  is valid under the (more restrictive)  condition
$$
 \int_{\mathcal{J}}\parallel p^{-1}(t)\parallel(1+ {\parallel Q(t)\parallel}^2)\,dt <\infty,
$$
which was (locally) assumed in the above-mentioned paper \cite{MS}.
Consider the block Shin--Zettl matrix
\begin{equation}\label{A0 matrix}
A:=\left ( \begin{array}{cc}
p^{-1} Q & p^{-1}\\
-Q p^{-1} Q & -Q
p^{-1}
\end{array}\right) \in L_1 (\mathcal{J}, M_{2s})
\end{equation}
and the corresponding quasi-derivatives
$$ D^{[0]} y = y, \quad
D^{[1]} y = py' - Qy, \quad
D^{[2]} y
=(D^{[1]} y)' + {Qp^{-1}}D^{[1]} y + {Q p^{-1}Q}y.
$$
For $q=Q'$  the Sturm-Liouville expression (\ref{vyraz}) is defined
by
\begin{equation}\label{vyraz1}l[y] := - D^{[2]} y. \end{equation}
The quasi-differential expression (\ref{vyraz1}) gives rise  to the \emph{maximal}
quasi-differential operator in the Hilbert space
$L_2\left(\mathcal{J}, \mathbb{C}^s\right) =: L_2$
 $$ L_{\operatorname{max}}:y \to
l[y],\quad \operatorname{Dom}(L_{\operatorname{max}}) := \left\{y
  \in L_2 \left| \quad  y, \,  D^{[1]}y \in AC([a,b],
    \mathbb{C}^s), D^{[2]} y \in L_2\right.\right\}.
$$
The \emph{minimal} quasi-differential operator is defined as a
restriction of the operator $L_{\operatorname{max}}$ onto the set
$$ \operatorname{Dom}(L_{\operatorname{min}})  := \left\{y \in
\operatorname{Dom}(L_{\operatorname{max}}) \left| \, \,  D^{[k]}y(a) =
D^{[k]}y(b) = 0, k = 0,1\right.\right\}.$$
 Note that under the assumption
$$p^{-1}, q \in L_1\left(\mathcal{J}, M_s\right)$$
  operators $L_{\operatorname{max}}, L_{\operatorname{min}}$
  introduced above coincide with the standard  maximal and minimal
  matrix Sturm-Liouville operators.
The regularization of the formally adjoint differential expression
$$l^+y:=-(p^*y')'(t) + q^*(t)y(t) $$
can be defined in an analogous way (here $A^*=\overline{A^T}$ is the conjugate transposed matrix to $A$).
Let $D^{\{k\}}$ ($k=0,1,2$) be the Shin--Zettl quasi-derivatives associated with $l^+$.
Denote by $L^+_{\operatorname{max}}$ and $L^+_{\operatorname{min}}$
the maximal and the minimal operators generated by this expression
on the space $L_2$. The following  results
 are proved in   \cite{Frent82} (see also \cite{MoZettl95}) in the case of general
quasi-differential matrix operators.
\begin{lemma}\label{Lagrange} {\rm(Green's formula)}. For any $y \in
\operatorname{Dom}(L_{\operatorname{max}})$,  $ z \in
\operatorname{Dom}(L^+_{\operatorname{max}})$ there holds \[ \int\limits_a^b
\left(D^{[2]}y\cdot\overline z  - y\cdot\overline{D^{\{2\}}z} \right)dt =
({D^{[1]}y\cdot\overline {z}}-{y\cdot\overline {D^{\{1\}}z}})\left|_{t = a}^{t = b}\,. \right. \]
\end{lemma}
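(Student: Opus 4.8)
The plan is to proceed by a direct computation, exploiting the algebraic structure built into the Shin--Zettl matrix $A$ and its conjugate $A^*$, rather than by approximation. Writing things in vector form, for $y\in\operatorname{Dom}(L_{\operatorname{max}})$ set $Y:=\bigl(D^{[0]}y,\,D^{[1]}y\bigr)^T=\bigl(y,\,py'-Qy\bigr)^T$; by the definition of the quasi-derivatives one checks that $Y$ is absolutely continuous on $[a,b]$ and satisfies the first-order system $Y' = AY + \bigl(0,\,-D^{[2]}y\bigr)^T$, i.e. $Y' - AY = \bigl(0,\,l[y]\bigr)^T$. Likewise, for $z\in\operatorname{Dom}(L^+_{\operatorname{max}})$ put $Z:=\bigl(D^{\{0\}}z,\,D^{\{1\}}z\bigr)^T$; since the quasi-derivatives $D^{\{k\}}$ are the Shin--Zettl derivatives attached to $A^*$, one has $Z' - A^*Z = \bigl(0,\,l^+[z]\bigr)^T$. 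The first step, then, is to record these two system representations carefully, checking that the entries of $Y'-AY$ really are $(0,l[y])^T$ by unwinding the definition of $D^{[2]}$.

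The second step is the pointwise Lagrange identity. Let $J:=\left(\begin{smallmatrix} 0 & -I_s\\ I_s & 0\end{smallmatrix}\right)$ be the $2s\times 2s$ symplectic matrix (with $I_s$ the identity on $\mathbb{C}^s$). The key algebraic fact to verify is that $A$ is ``$J$-symmetric'' in the sense $A^*J + JA = 0$, equivalently $J A = -(JA)^*$; this is a short block computation from \eqref{A0 matrix}, using that the off-diagonal blocks of $A$ are $p^{-1}$ and $-Qp^{-1}Q$ and that conjugating sends $p^{-1}Q$ to $Q^*(p^{-1})^*=Q^*(p^*)^{-1}$, which is exactly the $(1,1)$-block of $A^*$. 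Granted this, differentiate $(JY)\cdot\overline Z = (Z^*J Y)$: we get
\[
\frac{d}{dt}\bigl(Z^*JY\bigr) = (Z')^*JY + Z^*J Y' = \bigl(A^*Z + (0,l^+z)^T\bigr)^*JY + Z^*J\bigl(AY + (0,l[y])^T\bigr).
\]
The terms $Z^*A^*JY + Z^*JAY$ cancel by the $J$-symmetry of $A$, and what remains, after computing the action of $J$ on the vectors $(0,l[y])^T$ and $(0,l^+z)^T$, is precisely $D^{[2]}y\cdot\overline z - y\cdot\overline{D^{\{2\}}z}$. Thus $\dfrac{d}{dt}\bigl(Z^*JY\bigr) = D^{[2]}y\cdot\overline z - y\cdot\overline{D^{\{2\}}z}$ pointwise a.e.

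The third and final step is to integrate this identity over $[a,b]$. Since $Y$ and $Z$ are absolutely continuous and, by hypothesis, $D^{[2]}y, D^{\{2\}}z\in L_2\subset L_1$, the right-hand side is integrable and the fundamental theorem of calculus for absolutely continuous functions applies, giving
\[
\int_a^b\bigl(D^{[2]}y\cdot\overline z - y\cdot\overline{D^{\{2\}}z}\bigr)\,dt = \bigl(Z^*JY\bigr)\Big|_{t=a}^{t=b} = \bigl(D^{[1]}y\cdot\overline z - y\cdot\overline{D^{\{1\}}z}\bigr)\Big|_{t=a}^{t=b},
\]
where the last equality is just the definition of $J$ acting on $Y=(y,D^{[1]}y)^T$ paired against $Z=(z,D^{\{1\}}z)^T$. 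I expect the main obstacle to be purely bookkeeping: verifying the $J$-symmetry relation $A^*J+JA=0$ in block form and matching the quasi-derivatives $D^{\{k\}}$ of $l^+$ against the entries of $A^*$, since one must be careful that $l^+$ is built from $A^*$ with the same recipe as $l$ from $A$ (this is implicit in the sentence preceding the lemma). Once that algebraic identity is in hand, the rest is the standard integration-by-parts argument and requires no new ideas; indeed, as the excerpt notes, the statement is already established in \cite{Frent82, MoZettl95} for general quasi-differential matrix operators, so it suffices to check that our $A$ falls under their framework.
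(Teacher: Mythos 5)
The paper itself does not prove this lemma --- it is quoted from \cite{Frent82, MoZettl95} --- so any written-out argument is new relative to the text; your route (first-order system plus a symplectic cancellation) is the classical Shin--Zettl one and is the right idea. However, the key algebraic identity on which your second step rests is false as stated. The quasi-derivatives $D^{\{k\}}$ of $l^+$ are built from $p^*,Q^*$ by the same recipe as $D^{[k]}$ from $p,Q$, so the Shin--Zettl matrix of $l^+$ is
\[
A^+:=\left(\begin{smallmatrix}(p^*)^{-1}Q^* & (p^*)^{-1}\\[2pt] -Q^*(p^*)^{-1}Q^* & -Q^*(p^*)^{-1}\end{smallmatrix}\right)=JA^*J\neq A^*,
\]
since, e.g., the $(1,1)$ block of $A^*$ is $(p^{-1}Q)^*=Q^*(p^*)^{-1}$ while that of $A^+$ is $(p^*)^{-1}Q^*$, and these differ in the noncommutative setting. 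Consequently $Z'=A^+Z+(0,D^{\{2\}}z)^T$, not $A^*Z+\cdots$, and the relation you propose to verify, $A^*J+JA=0$, is simply not true for general coefficients: a direct computation gives $(A^*J+JA)_{22}=p^{-1}-(p^{-1})^*$, which vanishes only for Hermitian $p$. Since the lemma is asserted for arbitrary $p,Q$ (the Hermitian hypothesis enters only in Section 4), the verification you flag as ``the main obstacle'' would in fact fail.

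The repair is local and mechanical: the correct cancellation identity is $(A^+)^*J+JA=0$, which does hold because $(A^+)^*=J A J$ and $J^2=-I$; with it your differentiation of $Z^*JY$ goes through verbatim and integration gives the stated formula. Note also a sign slip in your first step: unwinding the definition of $D^{[2]}$ gives $Y'-AY=(0,D^{[2]}y)^T=(0,-l[y])^T$, not $(0,l[y])^T$ (compare the system displayed in the proof of Lemma \ref{lemm1}, where the inhomogeneity is $-f$ with $l_\varepsilon[y]=f$). This is harmless for the final identity, which is phrased in terms of $D^{[2]}y$ and $D^{\{2\}}z$, but it must be kept consistent when you evaluate $Z^*J(0,D^{[2]}y)^T=-D^{[2]}y\cdot\overline z$ and $\bigl((0,D^{\{2\}}z)^T\bigr)^*JY=y\cdot\overline{D^{\{2\}}z}$ and read off the boundary term $Z^*JY=-D^{[1]}y\cdot\overline z+y\cdot\overline{D^{\{1\}}z}$.
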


\begin{lemma}\label{surject} For any $(\alpha _0 ,\alpha _1),(\beta _0,\beta
_1)\in\mathbb{C}^{2s}$ there exists a function ${y \in
\operatorname{Dom}(L_{\operatorname{max}})}$ such that \[ D^{[k]}y(a) = \alpha _k , \quad
D^{[k]}y(b) = \beta _k, \quad k = 0,1. \] \end{lemma}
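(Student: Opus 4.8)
The plan is to exhibit the required function explicitly via the quasi-derivative formalism, reducing the problem to the solvability of a first-order linear system with $L_1$ coefficients. First I would pass from the scalar-type quasi-derivatives $D^{[k]}y$ to the vector $Y := (D^{[0]}y, D^{[1]}y)^T \in \mathbb{C}^{2s}$, and observe that $y \in \operatorname{Dom}(L_{\operatorname{max}})$ with $l[y] = f$ for a given $f \in L_2$ is equivalent to $Y$ being absolutely continuous on $[a,b]$ and satisfying the first-order system
\begin{equation}\label{system}
Y' = A\,Y + F, \qquad F := (0, -f)^T,
\end{equation}
where $A \in L_1(\mathcal{J}, M_{2s})$ is the block Shin--Zettl matrix (\ref{A0 matrix}); this is just a rewriting of the definitions of $D^{[1]}y$ and $D^{[2]}y$. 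The point of the regularization in Section 2 is precisely that $A \in L_1$, so the standard existence-and-uniqueness theory for linear systems with integrable coefficients applies.

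Next I would invoke that theory: since $A \in L_1(\mathcal{J}, M_{2s})$, for any initial data $Y(a) = (\alpha_0, \alpha_1)^T$ and any $F \in L_1$ the Carathéodory existence theorem (equivalently, Picard iteration using the integral equation $Y(t) = Y(a) + \int_a^t (A(s)Y(s) + F(s))\,ds$, which converges because $\int_a^b \|A(s)\|\,ds < \infty$) yields a unique absolutely continuous solution $Y$ on all of $[a,b]$. Taking $f \equiv 0$ (so $F \equiv 0$) already gives a solution $y$ with $D^{[k]}y(a) = \alpha_k$; the resulting $y = D^{[0]}y$ and $D^{[1]}y$ are then automatically absolutely continuous and $D^{[2]}y = 0 \in L_2$, so $y \in \operatorname{Dom}(L_{\operatorname{max}})$. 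The remaining issue is to also control the values at the right endpoint $b$.

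To match the data at both endpoints simultaneously, I would use linearity together with a dimension count. Let $U(t)$ be the $(2s)\times(2s)$ matrix solution of $U' = AU$, $U(a) = I$; it is absolutely continuous and invertible on $[a,b]$. For homogeneous $f$, the map sending $Y(a)$ to $Y(b) = U(b)Y(a)$ is a linear bijection of $\mathbb{C}^{2s}$. Hence, given target data $(\beta_0,\beta_1)^T$ at $b$, I first solve $Y' = AY + F$ with a convenient choice (say $Y(a) = 0$) to obtain a particular solution $Y_p$ with some terminal value $Y_p(b)$, and then add the homogeneous solution with initial value $Y(a) = U(b)^{-1}\big((\beta_0,\beta_1)^T - Y_p(b)\big)$; but to also hit the prescribed $(\alpha_0,\alpha_1)^T$ at $a$ I instead take $f \equiv 0$ altogether and simply split: pick the solution $Y_1$ of $Y'=AY$ with $Y_1(a) = (\alpha_0,\alpha_1)^T$, and then note that because we are free to prescribe the full $2s$-dimensional initial vector, the additional $2s$ conditions at $b$ cannot in general be met by a single solution of the homogeneous equation — this forces allowing $f \not\equiv 0$. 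The clean route, which I would adopt, is: choose any smooth $y_0 \in C^\infty([a,b],\mathbb{C}^s)$ (or piecewise-polynomial $Y_0(t)$) with $Y_0(a) = (\alpha_0,\alpha_1)^T$ and $Y_0(b) = (\beta_0,\beta_1)^T$, set $f := -l[y_0] \in L_1 \subseteq L_2$ wherever it is defined, and observe $y_0 \in \operatorname{Dom}(L_{\operatorname{max}})$ directly since its quasi-derivatives are built from $Y_0$ via the $L_1$ matrix $A$; then $y_0$ itself is the desired function. Thus the whole lemma reduces to: (i) the $L_1$ nature of $A$, already guaranteed by (\ref{KM cond}), and (ii) the fact that $D^{[k]}$ applied to a smooth $\mathbb{C}^s$-valued function lands in $L_1$, which is immediate from $p^{-1}, p^{-1}Q, Qp^{-1}, Qp^{-1}Q \in L_1$.

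The main obstacle is conceptual rather than computational: one must check that the naive guess "take a smooth function with the right boundary values" really produces an element of $\operatorname{Dom}(L_{\operatorname{max}})$, i.e. that $D^{[1]}y_0 = py_0' - Qy_0$ is absolutely continuous and $D^{[2]}y_0 \in L_2$. Absolute continuity of $D^{[1]}y_0$ follows because its derivative equals $(A Y_0)_2 + \text{(smooth terms)}$, which is in $L_1$; membership of $D^{[2]}y_0$ in $L_2$ is the only place one uses that $f = -l[y_0]$ is not merely $L_1$ but can be arranged in $L_2$ — and this is automatic on the finite interval $\mathcal{J}$ once one checks $l[y_0] \in L_1$, provided one is willing to invoke $L_1(\mathcal J)\not\subseteq L_2(\mathcal J)$ is false in the needed direction... in fact $L_2 \subseteq L_1$ here, not the reverse, so one should instead note that the construction via the system (\ref{system}) with $f \equiv 0$ shows $D^{[2]}y_0$ can be taken to be $0$, sidestepping the issue entirely. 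Concretely: solve (\ref{system}) with $F \equiv 0$ and $Y(a) = (\alpha_0,\alpha_1)^T$ to get $y_1 \in \operatorname{Dom}(L_{\operatorname{max}})$ matching the data at $a$; separately solve it backwards from $b$ with $Y(b) = (\beta_0,\beta_1)^T$ to get $y_2$ matching the data at $b$; pick a smooth cutoff $\varphi$ equal to $1$ near $a$ and $0$ near $b$; then $y := \varphi y_1 + (1-\varphi) y_2$ has the prescribed quasi-derivative values at both endpoints (since near each endpoint it coincides with the respective $y_i$), lies in $L_2$, has $D^{[1]}y \in AC$ (being a smooth combination of $AC$ functions plus $L_1$ terms), and $D^{[2]}y \in L_1 \subseteq L_2$ because the coefficients multiplying the bounded smooth data are all in $L_1$ by (\ref{KM cond}). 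This completes the argument.
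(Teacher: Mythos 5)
The paper does not prove this lemma itself; it is quoted from Frentzen \cite{Frent82} (see also \cite{MoZettl95}), where it is established for general matrix quasi-differential expressions. Your reduction to the first-order system $Y'=AY+F$ with $A\in L_1$ is the right starting point, but the constructions you finally settle on do not work. The trouble is that the coefficients $p$ and $Q$ are only measurable, with integrability assumed for $p^{-1}$, $p^{-1}Q$, $Qp^{-1}$, $Qp^{-1}Q$ --- \emph{not} for $p$ or $Q$ themselves. Consequently a smooth function $y_0$ with the right boundary values need not lie in $\operatorname{Dom}(L_{\operatorname{max}})$: there is no reason for $D^{[1]}y_0=py_0'-Qy_0$ to be absolutely continuous, or even integrable. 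The same defect kills the cutoff patching $y=\varphi y_1+(1-\varphi)y_2$: computing the quasi-derivative gives
$$D^{[1]}y=\varphi\,D^{[1]}y_1+(1-\varphi)\,D^{[1]}y_2+p\,\varphi'\,(y_1-y_2),$$
and the last term involves $p$ itself, which under (\ref{KM cond}) can fail to be locally integrable, let alone absolutely continuous. Patching the vectors $Y_1,Y_2$ instead of the scalar functions runs into the same obstruction, because $\varphi Y_1+(1-\varphi)Y_2$ no longer satisfies the first block row of the system, and that row is precisely the compatibility condition making the second component equal to the quasi-derivative of the first.

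The route you started and then abandoned --- allowing a nonzero inhomogeneity $f$ --- is the one that works, but it needs the one idea your write-up never supplies: surjectivity of the linear map $f\mapsto\int_a^b U(s)^{-1}\bigl(0,-f(s)\bigr)^T\,ds$ from $L_2(\mathcal J,\mathbb C^s)$ onto $\mathbb C^{2s}$, where $U$ is the fundamental matrix of $Y'=AY$ with $U(a)=I$. Granting this, one fixes $Y(a)=(\alpha_0,\alpha_1)^T$ and chooses $f$ so that the variation-of-constants formula delivers $Y(b)=(\beta_0,\beta_1)^T$; the resulting first component $y$ lies in $\operatorname{Dom}(L_{\operatorname{max}})$ by construction. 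The surjectivity itself is proved by duality: if $c\in\mathbb C^{2s}$ annihilates the range, the row vector $V=(v_1,v_2):=c^*U^{-1}$ solves $V'=-VA$ and has $v_2\equiv0$; the second block column of $V'=-VA$ then forces $v_1p^{-1}\equiv 0$, whence $v_1\equiv0$ because $p^{-1}$ is invertible almost everywhere, so $c=0$. Without this step (or an equivalent argument) the lemma is not established.
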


\begin{theorem}\label{L adjoint} The operators
$L_{\operatorname{min}}$, $L^+_{\operatorname{min}}$, $L_{\operatorname{max}}$,
$L^+_{\operatorname{max}}$ are closed and densely defined in $L_2\left([a,b], \mathbb{C}^s\right)$,
and satisfy \[ L_{\operatorname{min}}^* = L^+_{\operatorname{max}},\quad L_{\operatorname{max}}^* =
L^+_{\operatorname{min}}. \] In the case of Hermitian matrices $p$ and $Q$ the operator $L_{\operatorname{min}} =
L^+_{\operatorname{min}}$ is symmetric with the deficiency indices $\left({2s,2s} \right)$, and \[
L_{\operatorname{min}}^* = L_{\operatorname{max}},\quad L_{\operatorname{max}}^* =
L_{\operatorname{min}}. \] \end{theorem}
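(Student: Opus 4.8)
The plan is to recognise the situation as a special case of the Shin--Zettl calculus of quasi-differential operators with integrable matrix coefficients, for which the statement is classical (cf.\ \cite{Frent82, MoZettl95, Zettl}), and to point out the two places where hypothesis (\ref{KM cond}) actually does the work. Under (\ref{KM cond}) the four blocks of the matrix $A$ in (\ref{A0 matrix}) lie in $L_1(\mathcal{J},M_s)$, so $A\in L_1(\mathcal{J},M_{2s})$, and its super-diagonal block $p^{-1}$ is invertible a.e.; hence $A$ is a (block) Shin--Zettl matrix, and $y\in\operatorname{Dom}(L_{\operatorname{max}})$ precisely when the vector $u:=\bigl(D^{[0]}y,D^{[1]}y\bigr)^{\top}$ is absolutely continuous on $[a,b]$, satisfies the first-order system $u'=Au-\bigl(0,\,l[y]\bigr)^{\top}$, and $l[y]\in L_2$. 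The same description holds for $L^{+}_{\operatorname{max}},L^{+}_{\operatorname{min}}$, built from $l^{+}$ and its Shin--Zettl matrix, whose blocks again lie in $L_1$ because (\ref{KM cond}) is invariant under $p\mapsto p^{*}$, $Q\mapsto Q^{*}$.

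\emph{Closedness and dense definiteness.} First I would settle these directly for $L_{\operatorname{min}}$ and $L^{+}_{\operatorname{min}}$. Carath\'eodory's theorem for $u'=Au+g$ with $g\in L_1$ gives existence, uniqueness, and sup-norm continuous dependence of the absolutely continuous solution on its Cauchy vector $u(a)$. If $y_n\in\operatorname{Dom}(L_{\operatorname{max}})$ with $y_n\to y$ and $l[y_n]\to g$ in $L_2$, then writing $u_n$ as a particular solution (converging uniformly, since $l[y_n]\to g$ in $L_1$) plus the transition matrix of $u'=Au$ applied to $u_n(a)$, one recovers $u_n(a)$ continuously from $y_n$ — this uses that $p^{-1}$ is invertible a.e., so that the first block of a nontrivial homogeneous solution cannot vanish identically — hence $u_n(a)$ converges and $y\in\operatorname{Dom}(L_{\operatorname{max}})$ with $l[y]=g$; if moreover the $y_n$ lie in $\operatorname{Dom}(L_{\operatorname{min}})$ then $u_n(a)=u_n(b)=0$ passes to the limit, so $L_{\operatorname{min}}$ (and likewise $L^{+}_{\operatorname{min}}$) is closed. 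For density of $\operatorname{Dom}(L_{\operatorname{min}})$ and $\operatorname{Dom}(L^{+}_{\operatorname{min}})$ in $L_2$ one argues as in \cite{GorMih2, Frent82, MoZettl95}: the functions of $\operatorname{Dom}(L_{\operatorname{max}})$ whose quasi-derivatives $D^{[0]}y,D^{[1]}y$ vanish near both endpoints already span a dense subspace, produced by solving $l[y]=f$ on a compact subinterval with zero Cauchy data at both of its endpoints (possible for $f$ in a subspace of $L_2$ of finite codimension). The maximal domains contain these dense sets, so all four operators are densely defined.

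\emph{The adjoint relations.} I would prove $L_{\operatorname{min}}^{*}=L^{+}_{\operatorname{max}}$. The inclusion $L^{+}_{\operatorname{max}}\subseteq L_{\operatorname{min}}^{*}$ is immediate from Green's formula (Lemma~\ref{Lagrange}): for $y\in\operatorname{Dom}(L_{\operatorname{min}})$ the boundary terms at $a$ and $b$ vanish, so $\langle L_{\operatorname{min}}y,z\rangle=\langle y,L^{+}_{\operatorname{max}}z\rangle$. For the converse, let $z\in\operatorname{Dom}(L_{\operatorname{min}}^{*})$ and $h:=L_{\operatorname{min}}^{*}z$; pick $\tilde z\in\operatorname{Dom}(L^{+}_{\operatorname{max}})$ with $l^{+}[\tilde z]=h$ (solvability of the inhomogeneous equation). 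By the first inclusion, $\langle l[y],z-\tilde z\rangle=0$ for all $y\in\operatorname{Dom}(L_{\operatorname{min}})$, i.e.\ $z-\tilde z\perp\operatorname{ran}(L_{\operatorname{min}})$. Now $\operatorname{ran}(L_{\operatorname{min}})$ is the kernel of the bounded finite-rank map $f\mapsto\bigl(D^{[0]}y_f(b),D^{[1]}y_f(b)\bigr)$, where $y_f$ solves $l[y]=f$ with $u(a)=0$, so $\operatorname{ran}(L_{\operatorname{min}})^{\perp}$ is finite-dimensional; representing $y_f$ through the transition matrix of the system and invoking the Lagrange identity (equivalently, the integrated Green's formula) one identifies $\operatorname{ran}(L_{\operatorname{min}})^{\perp}$ with a space of solutions of $l^{+}[w]=0$ lying in $\operatorname{Dom}(L^{+}_{\operatorname{max}})$. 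Hence $z=\tilde z+(z-\tilde z)\in\operatorname{Dom}(L^{+}_{\operatorname{max}})$ with $l^{+}[z]=h$, so $L_{\operatorname{min}}^{*}\subseteq L^{+}_{\operatorname{max}}$. Taking adjoints (legitimate, $L_{\operatorname{min}}$ being closed and densely defined) gives $\bigl(L^{+}_{\operatorname{max}}\bigr)^{*}=L_{\operatorname{min}}$, and the same argument with $l$ replaced by $l^{+}$ yields $\bigl(L^{+}_{\operatorname{min}}\bigr)^{*}=L_{\operatorname{max}}$, whence also $L_{\operatorname{max}}^{*}=L^{+}_{\operatorname{min}}$. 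In particular $L_{\operatorname{max}}$ and $L^{+}_{\operatorname{max}}$, being adjoints, are closed.

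\emph{The Hermitian case.} If $p=p^{*}$ and $Q=Q^{*}$ a.e., a direct check gives $D^{\{k\}}=D^{[k]}$ and $l^{+}=l$, hence $L^{+}_{\operatorname{min}}=L_{\operatorname{min}}$ and $L^{+}_{\operatorname{max}}=L_{\operatorname{max}}$; the relations above become $L_{\operatorname{min}}^{*}=L_{\operatorname{max}}\supseteq L_{\operatorname{min}}$ and $L_{\operatorname{max}}^{*}=L_{\operatorname{min}}$, so $L_{\operatorname{min}}$ is symmetric. Its deficiency indices are $n_{\pm}=\dim\Ker\bigl(L_{\operatorname{min}}^{*}\mp iI\bigr)=\dim\Ker\bigl(L_{\operatorname{max}}\mp iI\bigr)$, and this is the dimension of the solution space of $l[y]=\pm iy$: since $\mathcal{J}$ is finite and the coefficient matrix of the corresponding homogeneous system lies in $L_1$, every solution is absolutely continuous on $[a,b]$, hence bounded, hence in $L_2$, and the solution space has dimension $2s$; thus $n_{+}=n_{-}=2s$. (Equivalently, $\dim\bigl(\operatorname{Dom}(L_{\operatorname{max}})/\operatorname{Dom}(L_{\operatorname{min}})\bigr)=4s$ by Lemma~\ref{surject}, which for a symmetric operator equals $n_{+}+n_{-}$.) The only genuinely non-routine points are the second inclusion above and the density of $\operatorname{Dom}(L_{\operatorname{min}})$; the latter is delicate precisely because (\ref{KM cond}) is weak enough that $C_0^{\infty}(\mathcal{J},\mathbb{C}^{s})$ need not lie in $\operatorname{Dom}(L_{\operatorname{max}})$ (for instance $p$ need not be locally integrable), so the dense family must be manufactured from compactly supported quasi-differentiable functions rather than from test functions, exactly as in the scalar theory \cite{GorMih2}.
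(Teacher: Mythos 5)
Your argument is sound in outline, but note that the paper itself offers no proof of this theorem: it is imported verbatim from the literature on matrix quasi-differential expressions (\cite{Frent82}, see also \cite{MoZettl95}), so there is no in-paper argument to compare against. What you have written is a faithful reconstruction of exactly the classical route those references take --- reduction to the first-order system $u'=Au+(0,-l[y])^{\top}$ with $A\in L_1$, Carath\'eodory theory for closedness, Green's formula plus the finite-codimension description of $\operatorname{ran}(L_{\operatorname{min}})$ for the adjoint identities, and the $2s$-dimensional solution space on a compact interval for the deficiency indices. The one step you assert rather than prove is the density of $\operatorname{Dom}(L_{\operatorname{min}})$: the family you construct on a compact subinterval $[c,d]$ is precisely $\operatorname{Dom}$ of the minimal operator on $[c,d]$ extended by zero, so its density is the very point at issue and requires the additional duality argument (showing that any $h$ orthogonal to it is forced into a finite-dimensional space on each $[c,d]$); since you explicitly defer to \cite{GorMih2,Frent82,MoZettl95} there, exactly as the paper does for the whole theorem, this is an acknowledged reliance on the literature rather than a gap.
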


\section{Convergence of resolvents}
Let $l_\varepsilon
[y] = -D_\varepsilon^{[2]}y$, $\varepsilon \in [0, \varepsilon_0]$,  be the quasi-differential expressions with
the coefficients
$p_\varepsilon, Q_\varepsilon$
satisfying (\ref{KM cond}).
These expressions generate the minimal operators
$L^\varepsilon_{\operatorname{min}} $,
$L^\varepsilon_{\operatorname{max}}$ in $L_2$.
Consider the quasi-differential operators
$$L_\varepsilon y =
l_\varepsilon[y],\quad \operatorname{Dom}(L_\varepsilon) =
\left\{\left.y \in
\operatorname{Dom}\left(L^\varepsilon_{\operatorname{max}}\right)\right|
\alpha(\varepsilon)\mathcal{Y}_\varepsilon(a)+\beta(\varepsilon)\mathcal{Y}_\varepsilon(b)=0\right\}.$$
Here
$\alpha(\varepsilon),\beta(\varepsilon)\in \mathbb{C}^{2s\times 2s}$
be complex matrices and
$$\mathcal{Y}_\varepsilon(a):=\left\{y(a),D^{[1]}_\varepsilon
y(a)\right\},\quad
\mathcal{Y}_\varepsilon(b):=\left\{y(b),D^{[1]}_\varepsilon
y(b)\right\}.$$
Clearly $L^\varepsilon_{\operatorname{min}} \subset L_\varepsilon
\subset L^\varepsilon_{\operatorname{max}}, \, \varepsilon \in
[0, \varepsilon_0].$
Denote by $\rho(L)$ the resolvent set of the operator $L$. Recall
that  $L_\varepsilon$ is said to converge to  $L_0$ in the norm resolvent sense, $L_\varepsilon
\stackrel{R}{\Rightarrow} L_0$, if there is a number $\mu \in \rho(L_0)$, such that  $\mu \in
\rho(L_\varepsilon)$ for all sufficiently small $\varepsilon$, and
\begin{equation}\label{R_conv}
\|(L_\varepsilon - \mu)^{-1} - (L_0 - \mu)^{-1}\| \rightarrow 0, \quad \varepsilon \rightarrow 0+.\end{equation}
It should be noted that if $L_\varepsilon
\stackrel{R}{\Rightarrow} L_0$, then the condition (\ref{R_conv}) is fulfilled for all
$\mu \in \rho(L_0)$ (see \cite{K}).

\begin{theorem}\label{res conv part}
  Suppose $\rho(L_0)$ is not empty and, for $\varepsilon \rightarrow
  0+$, the following conditions hold:
\begin{align*}
(1)&\,\, \|p^{-1}_\varepsilon - p^{-1}_0 \|_1\rightarrow 0,\\
(2)&\,\,
\|p^{-1}_\varepsilon Q_\varepsilon - p_0^{-1} Q_0\|_1 \rightarrow 0,\\
(3)&\,\, \|Q_\varepsilon p^{-1}_\varepsilon  - Q_0p_0^{-1}\|_1 \rightarrow 0,\\
(4)&\,\, \|Q_\varepsilon p^{-1}_\varepsilon Q_\varepsilon  - Q_0p_0^{-1}Q_0\|_1 \rightarrow 0,\\
(5)&\,\,\alpha(\varepsilon)\rightarrow \alpha(0),\quad
\beta(\varepsilon)\rightarrow\beta(0),
\end{align*}
where $\|\cdot\|_1$ is the norm in the space $L_1(\mathcal{J},
M_s)$. Then $L_\varepsilon \stackrel{R}{\Rightarrow} L_0$.
\end{theorem}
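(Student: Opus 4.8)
The plan is to reduce the norm resolvent convergence to a convergence statement for solutions of the first-order system associated with the Shin--Zettl matrix $A_\varepsilon$, and then to an explicit formula for the resolvent $(L_\varepsilon-\mu)^{-1}$ as an integral operator whose kernel depends continuously on the data. First I would fix $\mu\in\rho(L_0)$ and pass from the quasi-differential operator to the equivalent first-order system $\mathcal{U}' = (A_\varepsilon - \mu B)\,\mathcal{U}$ for the vector $\mathcal{U}=(y, D^{[1]}_\varepsilon y)^T$, where $B$ is the appropriate constant $2s\times 2s$ coupling matrix coming from the $-\mu y$ term. Conditions $(1)$--$(4)$ are exactly the statement that $A_\varepsilon\to A_0$ in $L_1(\mathcal{J},M_{2s})$, since the four blocks of $A_\varepsilon$ in (\ref{A0 matrix}) are precisely $p_\varepsilon^{-1}Q_\varepsilon$, $p_\varepsilon^{-1}$, $-Q_\varepsilon p_\varepsilon^{-1}Q_\varepsilon$, $-Q_\varepsilon p_\varepsilon^{-1}$.

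Next I would invoke the standard continuous dependence theorem for linear systems with $L_1$ coefficients: if $A_\varepsilon\to A_0$ in $L_1$, then the matrizant (fundamental matrix) $Y_\varepsilon(t,\mu)$ normalized by $Y_\varepsilon(a,\mu)=I$ converges to $Y_0(t,\mu)$ uniformly in $t\in[a,b]$, together with its dependence on $\mu$ locally uniformly; this is a Gronwall-type argument applied to the integral equation $Y_\varepsilon(t) = I + \int_a^t (A_\varepsilon-\mu B)Y_\varepsilon$. From the matrizant one builds the characteristic matrix for the boundary value problem, namely $\Delta_\varepsilon(\mu) := \alpha(\varepsilon) + \beta(\varepsilon)Y_\varepsilon(b,\mu)$ (in the quasi-derivative coordinates $\mathcal{Y}_\varepsilon$), and $\mu\in\rho(L_\varepsilon)$ is equivalent to invertibility of $\Delta_\varepsilon(\mu)$. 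Since $\mu\in\rho(L_0)$, $\Delta_0(\mu)$ is invertible; by the uniform convergence of $Y_\varepsilon(b,\mu)$ and condition $(5)$, $\Delta_\varepsilon(\mu)\to\Delta_0(\mu)$, hence $\Delta_\varepsilon(\mu)$ is invertible with $\Delta_\varepsilon(\mu)^{-1}\to\Delta_0(\mu)^{-1}$ for all sufficiently small $\varepsilon$. This gives $\mu\in\rho(L_\varepsilon)$ for small $\varepsilon$.

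Then I would write $(L_\varepsilon-\mu)^{-1}$ explicitly as an integral operator with Green's kernel $G_\varepsilon(t,\tau,\mu)$, assembled from the matrizant $Y_\varepsilon$, the boundary matrices $\alpha(\varepsilon),\beta(\varepsilon)$, and $\Delta_\varepsilon(\mu)^{-1}$ in the usual way (two solution branches glued along the diagonal, corrected by the boundary conditions). The key point is that every ingredient of $G_\varepsilon$ converges: $Y_\varepsilon\to Y_0$ uniformly on $[a,b]$, the coupling with $-\mu y$ is continuous, $\alpha(\varepsilon)\to\alpha(0)$, $\beta(\varepsilon)\to\beta(0)$, and $\Delta_\varepsilon(\mu)^{-1}\to\Delta_0(\mu)^{-1}$. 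Consequently $G_\varepsilon(t,\tau,\mu)\to G_0(t,\tau,\mu)$ uniformly on $[a,b]\times[a,b]$, and since $\|(L_\varepsilon-\mu)^{-1}-(L_0-\mu)^{-1}\|$ is dominated by (a constant times) $\sup_{t,\tau}\|G_\varepsilon(t,\tau,\mu)-G_0(t,\tau,\mu)\|$ via the Hilbert--Schmidt norm on $L_2(\mathcal{J},\mathbb{C}^s)$, we obtain (\ref{R_conv}), i.e. $L_\varepsilon\stackrel{R}{\Rightarrow} L_0$.

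The main obstacle, and the step that needs the most care, is the passage to the first-order system and the verification that the quasi-derivative boundary data $\mathcal{Y}_\varepsilon(a),\mathcal{Y}_\varepsilon(b)$ transform correctly under this reduction, so that the Gronwall estimate genuinely only sees $\|A_\varepsilon-A_0\|_1$ and not, say, uncontrolled pointwise behaviour of $p_\varepsilon^{-1}$ or $Q_\varepsilon$ separately. In particular one must note that $D^{[1]}_\varepsilon y = p_\varepsilon y' - Q_\varepsilon y$ is the second component of $\mathcal{U}$ and is absolutely continuous even though $y'$ itself need not be, so all boundary functionals are continuous on solutions of the system; this is precisely what makes conditions $(1)$--$(4)$ — rather than separate control of $p_\varepsilon$ and $Q_\varepsilon$ — the natural hypotheses. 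Once this reduction is set up cleanly, the remainder is the classical Gronwall/continuous-dependence argument plus bookkeeping on the Green kernel.
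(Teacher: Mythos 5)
Your proposal is correct and follows essentially the same route as the paper: reduce the resolvent equation to the first-order Shin--Zettl system for $w=(y,D^{[1]}_\varepsilon y)$, observe that conditions $(1)$--$(4)$ give $L_1$-convergence of the system matrix $A(\cdot;\varepsilon)$, deduce uniform convergence of the Green matrices of the associated boundary-value problems, and dominate the operator-norm difference of the resolvents by the Hilbert--Schmidt norm of the kernel difference. The only divergence is one of packaging: where you run the Gronwall and characteristic-matrix arguments by hand, the paper invokes a ready-made limit theorem for one-dimensional boundary-value problems (Kodlyuk--Mikhailets--Reva), whose hypothesis $A(\cdot;\varepsilon)-A(\cdot;0)\in\mathcal{M}^{2s}$ is weaker than $L_1$-convergence and is supplied via Tamarkin's lemma, and it normalizes to $0\in\rho(L_0)$ by absorbing $\mu$ into $Q_\varepsilon$ instead of carrying a coupling matrix $-\mu B$ through the system.
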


In essential the proof of Theorem \ref{res conv part} repeats the
arguments of \cite{GorMih2} where the scalar case $s=1$ was considered. Nevertheless the  result seems to be new even in the case of  one-dimensional Schr\"odinger operators with distributional matrix-valued potentials
($p_\varepsilon$ is the identity matrix in $\mathbb{C}^s$).
Recall the
following definition \cite{MR2}.

\begin{definition}
  Denote by $\mathcal{M}^{m}(\mathcal{J}) =:\mathcal{M}^{m},$ $ m\in
  \mathbb{N}$, the class of matrix-valued functions
$$R(\cdot;\varepsilon):[0,\varepsilon_0]\rightarrow L_1 (\mathcal{J}, {\mathbb{C}}^{m\times m})$$
parametrized by $ \varepsilon $
such that the solution of the Cauchy problem
$$
Z'(t;\varepsilon)= R( t;\varepsilon)Z(t;\varepsilon), \quad
Z(a;\varepsilon) = I,
$$
satisfies the limit condition
$$
\lim\limits_{\varepsilon \rightarrow 0+} \|Z(\cdot;\varepsilon) -
I\|_{\infty} =0,
$$
where $\|\cdot\|_{\infty}$ is the sup-norm.
\end{definition}
We need  the following  result \cite{MR2}.
\begin{theorem}\label{1 limit G}
  Suppose that the vector boundary-value problem
\begin{equation}\label{bound probl 1}
    y'(t;\varepsilon)=A(t;\varepsilon)y(t;\varepsilon)+f(t;\varepsilon),\quad
t \in \mathcal{J}, \quad \varepsilon \in [0, \varepsilon_0],
\end{equation}
\begin{equation}\label{bound probl 2}
    U_{\varepsilon}y(\cdot;\varepsilon)= 0,
\end{equation}
where the matrix-valued functions $A(\cdot,\varepsilon) \in
L_{1}(\mathcal{J}, {\mathbb{C}}^{m\times m})$, the vector-valued functions
$f(\cdot,\varepsilon) \in L_{1}(\mathcal{J}, {\mathbb{C}}^{m})$, and the linear continuous
operators
$$U_{\varepsilon}:C(\overline{\mathcal{J}};\mathbb{C}^{m}) \rightarrow\mathbb{C}^{m}, \quad m \in \mathbb{N},$$
satisfy the following conditions.
\begin{align*}
  1) \quad &\text{The homogeneous limit boundary-value
    problem~}(\ref{bound probl 1}), (\ref{bound probl 2})
  \text{~with~} \varepsilon = 0  \text{~and~} f(\cdot;0) \equiv 0 \\
  &\text{~ has only a trivial solution;}\\
  2)\quad &A(\cdot;\varepsilon)-A(\cdot;0)\in \mathcal{M}^m;\\
  3)\quad &\|U_{\varepsilon} - U_{0}\|\rightarrow 0,\quad
  \varepsilon\rightarrow 0+.
\end{align*}
Then, for a small enough $\varepsilon$, there exist Green matrices
$G(t, s; \varepsilon)$ for problems (\ref{bound probl 1}),
(\ref{bound probl 2}) and
\begin{equation}\label{G}
 \|G(\cdot,\cdot;\varepsilon)-G(\cdot,\cdot;0)\|_\infty \rightarrow 0,\quad \varepsilon\rightarrow 0+,
\end{equation}
where $\|\cdot\|_\infty$ is the norm in the space $L_\infty(\mathcal{J}\times \mathcal{J}, \, {\mathbb{C}}^{m\times m})$.
\end{theorem}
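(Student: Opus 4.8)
The plan is to construct the Green matrix explicitly from the fundamental matrix of the homogeneous system and then to establish $(\ref{G})$ factor by factor. Let $Y(\cdot;\varepsilon)$ denote the solution of the matrix Cauchy problem $Y'(t;\varepsilon)=A(t;\varepsilon)Y(t;\varepsilon)$, $Y(a;\varepsilon)=I$, which exists and is absolutely continuous since $A(\cdot;\varepsilon)\in L_1$. The whole argument reduces to three ingredients: (i) the uniform convergence $Y(\cdot;\varepsilon)\to Y(\cdot;0)$ together with the convergence of the inverses; (ii) the invertibility and convergence of the characteristic matrix attached to the boundary operator; and (iii) the assembly of $G$ out of these pieces.

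First I would establish the uniform convergence of the fundamental matrices, which is where hypothesis $(2)$ enters. Write $R(\cdot;\varepsilon):=A(\cdot;\varepsilon)-A(\cdot;0)\in\mathcal{M}^m$ and factor $Y(\cdot;\varepsilon)=Y(\cdot;0)W(\cdot;\varepsilon)$. A direct differentiation shows that $W$ solves $W'=\widetilde R\,W$, $W(a;\varepsilon)=I$, with $\widetilde R(\cdot;\varepsilon):=Y(\cdot;0)^{-1}R(\cdot;\varepsilon)Y(\cdot;0)$. Since $Y(\cdot;0)$ and $Y(\cdot;0)^{-1}$ are continuous, hence bounded on $[a,b]$, one shows that the class $\mathcal{M}^m$ is stable under this conjugation, so $\widetilde R(\cdot;\varepsilon)\in\mathcal{M}^m$ and therefore $\|W(\cdot;\varepsilon)-I\|_\infty\to0$ by the very definition of $\mathcal{M}^m$. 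Consequently $Y(\cdot;\varepsilon)\to Y(\cdot;0)$ uniformly; since $\det Y(\cdot;0)$ is bounded away from zero and determinants depend continuously on entries, the inverses $Y(\cdot;\varepsilon)^{-1}\to Y(\cdot;0)^{-1}$ uniformly as well. In particular the Cauchy kernel
$$K(t,s;\varepsilon):=Y(t;\varepsilon)Y(s;\varepsilon)^{-1}\ \text{for}\ s\le t,\qquad K(t,s;\varepsilon):=0\ \text{for}\ s>t,$$
converges to $K(\cdot,\cdot;0)$ in $L_\infty(\mathcal{J}\times\mathcal{J})$.

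Next I would handle the boundary operator. Set $M(\varepsilon):=U_\varepsilon[Y(\cdot;\varepsilon)]\in\mathbb{C}^{m\times m}$, the result of applying $U_\varepsilon$ columnwise. Every solution of the homogeneous system is $Y(\cdot;0)c$, and $U_0[Y(\cdot;0)c]=M(0)c$; thus hypothesis $(1)$ is equivalent to the invertibility of $M(0)$. Splitting
$$M(\varepsilon)-M(0)=U_\varepsilon\bigl[Y(\cdot;\varepsilon)-Y(\cdot;0)\bigr]+(U_\varepsilon-U_0)\bigl[Y(\cdot;0)\bigr]$$
and using the uniform convergence of the previous step, the uniform boundedness of $\|U_\varepsilon\|$ implied by $(3)$, and $(3)$ itself, we get $M(\varepsilon)\to M(0)$. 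Hence $M(\varepsilon)$ is invertible for all small $\varepsilon$ and $M(\varepsilon)^{-1}\to M(0)^{-1}$; this already yields the unique solvability of the boundary-value problem, i.e. the existence of $G(\cdot,\cdot;\varepsilon)$ for small $\varepsilon$.

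Writing the solution of $(\ref{bound probl 1})$ with $y(a)=c$ as $y(t)=Y(t;\varepsilon)c+\int_a^b K(t,s;\varepsilon)f(s)\,ds$, imposing $(\ref{bound probl 2})$, and solving for $c$ via $M(\varepsilon)^{-1}$ gives the explicit formula
$$G(t,s;\varepsilon)=K(t,s;\varepsilon)-Y(t;\varepsilon)M(\varepsilon)^{-1}\,U_\varepsilon\bigl[K(\cdot,s;\varepsilon)\bigr],$$
the interchange of $U_\varepsilon$ with the $s$-integration being justified by representing $U_\varepsilon$ through a matrix measure of bounded variation and applying Fubini. The convergence $(\ref{G})$ then follows by combining the three convergences already obtained: $K(\cdot,\cdot;\varepsilon)\to K(\cdot,\cdot;0)$ in $L_\infty$, $Y(\cdot;\varepsilon)M(\varepsilon)^{-1}\to Y(\cdot;0)M(0)^{-1}$ uniformly, and $U_\varepsilon[K(\cdot,s;\varepsilon)]\to U_0[K(\cdot,s;0)]$ uniformly in $s$ (again split as above, using $(3)$ and the $L_\infty$-convergence of the kernel). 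I expect the main obstacle to be the conjugation-invariance of the class $\mathcal{M}^m$ used in the second paragraph: because the hypothesis is the subtle $\mathcal{M}^m$-convergence rather than $\|A(\cdot;\varepsilon)-A(\cdot;0)\|_1\to0$, a plain Gronwall estimate is not available, and one must exploit the defining property of $\mathcal{M}^m$ together with its stability under multiplication by continuous invertible matrix functions.
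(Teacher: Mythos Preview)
The paper does not actually prove this theorem: it is quoted verbatim from \cite{MR2} and used as a black box in the proof of Theorem~\ref{res conv part}. So there is no ``paper's own proof'' to compare against; your outline is a self-contained argument for a result the authors simply import.

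Your overall architecture (uniform convergence of the fundamental matrices $\Rightarrow$ invertibility and convergence of the characteristic matrix $M(\varepsilon)=U_\varepsilon[Y(\cdot;\varepsilon)]$ $\Rightarrow$ assembly of the Green matrix and $L_\infty$-convergence) is the standard and correct one. The only soft spot is exactly the one you flag: with the factorisation $Y(\cdot;\varepsilon)=Y(\cdot;0)W(\cdot;\varepsilon)$ you are forced to prove that $\mathcal{M}^m$ is closed under conjugation by $Y(\cdot;0)$, and by the definition of $\mathcal{M}^m$ this statement is \emph{equivalent} to the convergence $W\to I$ you want, so nothing has been gained yet. The clean way around it is to factor in the other order: set $Y(\cdot;\varepsilon)=Z(\cdot;\varepsilon)V(\cdot;\varepsilon)$ with $Z'=R\,Z$, $Z(a)=I$, where $R=A(\cdot;\varepsilon)-A(\cdot;0)$. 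Then $V'=\bigl(Z^{-1}A(\cdot;0)Z\bigr)V$, $V(a)=I$, and since $Z\to I$ uniformly (by the \emph{definition} of $\mathcal{M}^m$) and $A(\cdot;0)\in L_1$, one has $\|Z^{-1}A(\cdot;0)Z-A(\cdot;0)\|_1\to 0$; a plain Gronwall estimate now gives $V\to Y(\cdot;0)$ uniformly, hence $Y(\cdot;\varepsilon)=ZV\to Y(\cdot;0)$. After that, the rest of your argument goes through as written.
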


It follows from  \cite{Tamar} that conditions (1)--(4) of
Theorem~\ref{res conv part} imply
$$
A(\cdot;\varepsilon)-A(\cdot;0)\in \mathcal{M}^{2s},
$$
where the block Shin--Zettl matrix $A(\cdot;\varepsilon)$ is given by
the formula
\begin{equation}\label{A matrix}
A(\cdot;\varepsilon):=\left ( \begin{array}{cc}
p^{-1}_\varepsilon Q_\varepsilon& p^{-1}_\varepsilon\\
-Q_\varepsilon p^{-1}_\varepsilon Q_\varepsilon& -Q_\varepsilon
p^{-1}_\varepsilon
\end{array}\right).
\end{equation}
In particular $A(\cdot;0)=A$ (see (\ref{A0 matrix}).
The following two
lemmas  reduce  Theorem~\ref{res conv part} to Theorem \ref{1 limit
G}.

\begin{lemma}\label{lemm1}
  The function $y(t)$ is a solution of the boundary-value problem
\begin{equation}\label{D^2}
 l_\varepsilon[y](t)= f(t;\varepsilon) \in L_2 ,\quad\varepsilon\in [0,\varepsilon_0],
\end{equation}
\begin{equation}\label{alpha+beta}
  \alpha(\varepsilon)\mathcal{Y}_\varepsilon(a)+
  \beta(\varepsilon)\mathcal{Y}_\varepsilon(b)=0,
\end{equation}
if and only if the vector-valued function $w(t) = (y(t),
D^{[1]}_\varepsilon y(t))$ is a solution of the boundary-value
problem
\begin{equation}\label{diff eq}
w'(t)=A(t;\varepsilon)w(t) + \varphi(t;\varepsilon),
\end{equation}
\begin{equation}\label{diff alpha+beta}
\alpha(\varepsilon)w(a)+
  \beta(\varepsilon)w(b)=0,
  \end{equation}
  where the matrix-valued function $A(\cdot;\varepsilon)$ is given
  by (\ref{A matrix}) and $\varphi(\cdot;\varepsilon) := (0,
  -f(\cdot;\varepsilon))$.
\end{lemma}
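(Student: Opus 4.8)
The plan is to unwind the definitions of the Shin--Zettl quasi-derivatives and to recognise the first-order system \eqref{diff eq} as nothing more than a restatement of the two relations defining $D^{[1]}_\varepsilon y$ and $D^{[2]}_\varepsilon y$. Write $w=(w_1,w_2)$ with $w_1:=y$ and $w_2:=D^{[1]}_\varepsilon y=p_\varepsilon y'-Q_\varepsilon y$. First I would solve this last identity for $y'$: since $p_\varepsilon$ is invertible almost everywhere, $y'=p_\varepsilon^{-1}Q_\varepsilon w_1+p_\varepsilon^{-1}w_2$, which is exactly the first block row of $w'=A(\cdot;\varepsilon)w+\varphi(\cdot;\varepsilon)$ in view of \eqref{A matrix} and $\varphi(\cdot;\varepsilon)=(0,-f(\cdot;\varepsilon))$. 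Next, from $D^{[2]}_\varepsilon y=(D^{[1]}_\varepsilon y)'+Q_\varepsilon p_\varepsilon^{-1}D^{[1]}_\varepsilon y+Q_\varepsilon p_\varepsilon^{-1}Q_\varepsilon y$ together with $l_\varepsilon[y]=-D^{[2]}_\varepsilon y=f(\cdot;\varepsilon)$ I would solve for $w_2'=(D^{[1]}_\varepsilon y)'$ and obtain $w_2'=-Q_\varepsilon p_\varepsilon^{-1}Q_\varepsilon w_1-Q_\varepsilon p_\varepsilon^{-1}w_2-f(\cdot;\varepsilon)$, which is precisely the second block row of \eqref{diff eq}. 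Hence \eqref{D^2} for $y$ is equivalent to \eqref{diff eq} for $w$.

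For the boundary conditions, I would simply note that by definition $\mathcal{Y}_\varepsilon(a)=\{y(a),D^{[1]}_\varepsilon y(a)\}=w(a)$ and likewise $\mathcal{Y}_\varepsilon(b)=w(b)$, so \eqref{alpha+beta} and \eqref{diff alpha+beta} are literally the same equation; this step requires no computation.

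It then remains to check that the regularity requirements match on both sides. If $y\in\operatorname{Dom}(L^\varepsilon_{\operatorname{max}})$ then $y$ and $D^{[1]}_\varepsilon y$ are absolutely continuous on $[a,b]$, so $w\in AC([a,b],\mathbb{C}^{2s})$; since $A(\cdot;\varepsilon)\in L_1$ and $f(\cdot;\varepsilon)\in L_2\subset L_1(\mathcal{J})$ (the interval being finite), $w$ satisfies \eqref{diff eq} a.e. Conversely, any solution $w$ of \eqref{diff eq} with $A(\cdot;\varepsilon)\in L_1$ and $\varphi(\cdot;\varepsilon)\in L_1$ is absolutely continuous; setting $y:=w_1$ gives $y\in AC$, then $D^{[1]}_\varepsilon y=w_2\in AC$ by the first block row, and $D^{[2]}_\varepsilon y=-f(\cdot;\varepsilon)\in L_2$ by the computation above, so $y\in\operatorname{Dom}(L^\varepsilon_{\operatorname{max}})$. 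The lemma is essentially bookkeeping and I do not expect a real obstacle; the only point to keep in mind is the inclusion $L_2(\mathcal{J})\subset L_1(\mathcal{J})$, which guarantees that the inhomogeneous term $\varphi(\cdot;\varepsilon)$ lies in $L_1$, so that the standard existence, uniqueness, and absolute-continuity theory for linear systems with $L_1$ coefficients applies verbatim.
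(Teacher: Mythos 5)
Your argument is correct and is essentially the same as the paper's: both proofs rewrite the defining relations for $D^{[1]}_\varepsilon y$ and $D^{[2]}_\varepsilon y$ as the two block rows of the first-order system \eqref{diff eq} and observe that the boundary conditions coincide by definition of $\mathcal{Y}_\varepsilon$. Your extra paragraph on matching the regularity requirements (absolute continuity, $L_2\subset L_1$ on a finite interval) is a point the paper leaves implicit, but it is consistent with the same approach.
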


\begin{proof}
Consider the system of equations
$$\left\{
\begin{aligned}
  ( D^{[0]}_\varepsilon y(t))' & = p^{-1}_\varepsilon(t) Q_\varepsilon(t)D^{[0]}_\varepsilon y(t) +
    p^{-1}_\varepsilon(t)D^{[1]}_\varepsilon y(t),\\
   ( D^{[1]}_\varepsilon y(t))' & =
   - {Q_\varepsilon(t)}p^{-1}_\varepsilon(t) Q_\varepsilon(t)D^{[0]}_\varepsilon y(t) -
    {Q_\varepsilon(t)}{p^{-1}_\varepsilon(t)}D^{[1]}_\varepsilon y(t) - f(t; \varepsilon). \\
\end{aligned}
\right. $$
Let $y(\cdot)$ be a solution of (\ref{D^2}), then the definition of
a quasi-derivative implies that $y(\cdot)$ is a solution of this
system.  On the other hand, denoting $w(t) = (D^{[0]}_\varepsilon
y(t), D^{[1]}_\varepsilon y(t))$ and $\varphi(t;\varepsilon) = (0,
-f(t;\varepsilon))$, we rewrite this system in the form of equation
(\ref{diff eq}).
Taking into account that $\mathcal{Y}_\varepsilon(a) = w(a)$,
$\mathcal{Y}_\varepsilon(b) = w(b)$, one can see that the boundary
conditions (\ref{alpha+beta}) are equivalent to the boundary
conditions (\ref{diff alpha+beta}).
\end{proof}

\begin{lemma}\label{Gamma exist}
Let a Green
matrix
 $$
 G(t,s,\varepsilon)=(g_{ij}(t,s,\varepsilon))_{i,j=1}^2\in L_\infty (\mathcal{J}\times \mathcal{J}, \, {\mathbb{C}}^{ 2s\times 2s})
 $$
 exist for the problem (\ref{diff eq}), (\ref{diff alpha+beta}) for
 small enough $\varepsilon$. Then there exists a Green function
 $\Gamma(t,s;\varepsilon)$ for the semi-homogeneous boundary-value
 problem (\ref{D^2}), (\ref{alpha+beta}) and
$$ \Gamma(t,s;\varepsilon) = -g_{12}(t,s;\varepsilon)
\quad\mbox{a.e.}$$
\end{lemma}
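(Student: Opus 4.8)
The plan is to combine the equivalence established in Lemma~\ref{lemm1} with the defining property of a Green matrix for a first-order linear system. Recall that, by definition, once the Green matrix $G(\cdot,\cdot;\varepsilon)$ exists for the problem (\ref{diff eq}), (\ref{diff alpha+beta}), the unique solution of that problem with a given free term $\varphi(\cdot;\varepsilon)\in L_1(\mathcal{J},\mathbb{C}^{2s})$ is
\[
w(t)=\int_a^b G(t,s;\varepsilon)\,\varphi(s;\varepsilon)\,ds .
\]
In particular, the existence of $G$ forces the homogeneous problem (\ref{diff eq}), (\ref{diff alpha+beta}) with $\varphi\equiv 0$ to have only the trivial solution; by Lemma~\ref{lemm1} the homogeneous problem (\ref{D^2}), (\ref{alpha+beta}) with $f\equiv 0$ then also has only the trivial solution, so a Green function $\Gamma(\cdot,\cdot;\varepsilon)$ for (\ref{D^2}), (\ref{alpha+beta}) is well defined and unique.

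Next I would fix an arbitrary $f(\cdot;\varepsilon)\in L_2\subset L_1(\mathcal{J},\mathbb{C}^s)$ and let $y$ be the corresponding solution of the semi-homogeneous problem (\ref{D^2}), (\ref{alpha+beta}). By Lemma~\ref{lemm1} the vector function $w=(y,D^{[1]}_\varepsilon y)$ solves (\ref{diff eq}), (\ref{diff alpha+beta}) with $\varphi(\cdot;\varepsilon)=(0,-f(\cdot;\varepsilon))$, so that
\[
w(t)=\int_a^b G(t,s;\varepsilon)\,\bigl(0,\,-f(s;\varepsilon)\bigr)\,ds .
\]
Writing $G=(g_{ij})_{i,j=1}^{2}$ in $2\times2$ block form with $s\times s$ blocks and reading off the first block component of $w$, which is $y$, the contribution of the zero block disappears and one is left with
\[
y(t)=-\int_a^b g_{12}(t,s;\varepsilon)\,f(s;\varepsilon)\,ds ,\qquad t\in\mathcal{J}.
\]
Since $f$ is arbitrary and, by the uniqueness noted above, the kernel representing $y$ through $f$ is determined up to a.e.\ equality, this identifies $\Gamma(t,s;\varepsilon)=-g_{12}(t,s;\varepsilon)$ a.e.; note that $g_{12}(\cdot,\cdot;\varepsilon)\in L_\infty(\mathcal{J}\times\mathcal{J},\mathbb{C}^{s\times s})$ by hypothesis, so the right-hand side is a genuine $L_\infty$ kernel.

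I do not anticipate a serious obstacle: the statement is essentially a bookkeeping translation between the second-order quasi-differential problem and its first-order companion system, which has already been arranged in Lemma~\ref{lemm1}. The only points needing a little care are the uniqueness of $\Gamma$ — i.e.\ verifying that the existence of $G$ really does exclude nontrivial solutions of the two homogeneous problems — and keeping the block structure straight, so that it is precisely the $(1,2)$-block $g_{12}$, paired with the $(0,-f)$ shape of $\varphi$ coming from $l_\varepsilon[y]=f$, that yields the first component $y$.
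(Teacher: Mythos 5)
Your argument is correct and follows essentially the same route as the paper's proof: invoke the defining representation $w(t)=\int_a^b G(t,s;\varepsilon)\varphi(s;\varepsilon)\,ds$, use Lemma~\ref{lemm1} to identify $w=(y,D^{[1]}_\varepsilon y)$ and $\varphi=(0,-f)$, and read off the first block component to obtain $\Gamma=-g_{12}$ a.e. The additional remarks on uniqueness are a harmless elaboration of what the paper leaves implicit.
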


\begin{proof}
  According to the definition of a Green matrix, a unique solution
  of the problem (\ref{diff eq}), (\ref{diff alpha+beta}) can be written
  in the form
$$w_\varepsilon(t)=\int\limits_a ^b
G(t,s;\varepsilon)\varphi(s;\varepsilon) ds, \quad t\in \mathcal{J}.$$

Due to Lemma \ref{lemm1}, the latter equality can be rewritten in the
form
$$\left\{
\begin{aligned}
    D^{[0]}_\varepsilon y_\varepsilon(t) & = \int\limits_a^b g_{12}(t,s;\varepsilon)(-f(s;\varepsilon))\,ds, \\
    D^{[1]}_\varepsilon y_\varepsilon(t) & = \int\limits_a^b g_{22}(t,s;\varepsilon)(-f(s;\varepsilon))\,ds, \\
\end{aligned}
\right.$$ where $y_\varepsilon(\cdot)$ is a unique solution of
(\ref{D^2}), (\ref{alpha+beta}).  This implies the statement of
Lemma \ref{Gamma exist}.
\end{proof}

\begin{proof}[Proof of Theorem \ref{res conv part}]
Consider  matrices $$Q_{\varepsilon(t),\mu}=Q_\varepsilon(t)+\mu tI,
\,\, p_{\varepsilon(t),\mu}=p_\varepsilon(t)$$  corresponding to the
operators $L_\varepsilon + \mu I$. Clearly assumption (\ref{KM
cond}) and conditions (1)--(4) of Theorem \ref{res conv part}  do
not depend on $\mu$ and we can assume without loss of generality
that $0 \in \rho(L_0)$. It follows that the homogeneous boundary-value problem
  $$l_0[y](t)=0, \quad \alpha(0)\mathcal{Y}_0(a)+
  \beta(0)\mathcal{Y}_0(b) = 0$$ has only a trivial
  solution. Due to Lemma \ref{lemm1} the homogeneous boundary-value problem
$$w'(t)=A(t;0)w(t), \quad \alpha(0)w(a) + \beta(0)w(b)=0$$
also has only a trivial solution.
By conditions (1)--(4) of Theorem \ref{res conv part} we have
 that $A(\cdot;\varepsilon)-A(\cdot;0)\in \mathcal{M}^{2s}$,
where $A(\cdot;\varepsilon)$ is given by formula (\ref{A matrix}).
Thus statement of Theorem \ref{res conv part} implies that the
problem (\ref{diff eq}), (\ref{diff alpha+beta}) satisfies
conditions of Theorem \ref{1 limit G}. It follows that Green
matrices $G(t,s;\varepsilon)$ of the problems (\ref{diff eq}),
(\ref{diff alpha+beta}) exist.  Taking into account Lemma \ref{Gamma exist}  and  (\ref{G}) we have that
\begin{align*}
\|L_\varepsilon^{-1}- L_0^{-1}\| & \leq \|L_\varepsilon^{-1}- L_0^{-1}\|_{HS}=\|\Gamma(\cdot,\cdot;\varepsilon) -
\Gamma(\cdot,\cdot;0)\|_2 \\
& \leq (b -
a)\|\Gamma(\cdot,\cdot;\varepsilon) -
\Gamma(\cdot,\cdot;0)\|_\infty \rightarrow 0, \quad \varepsilon
\rightarrow 0+.
\end{align*}
Here $\|\cdot\|_{HS}$ is the Hilbert-Schmidt norm.
\end{proof}
\begin{remark}\label{HS}
It follows from the proof that   $(L_{\varepsilon}-\mu)^{-1} \rightarrow (L_{0}-\mu)^{-1}$ in a
Hilbert-Schmidt norm for all $\mu \in \rho (L_0)$.
\end{remark}

\section{Extensions of symmetric minimal operator}\label{section_ext}
In what follows we additionally suppose that the matrix functions $p$, $Q$ and,
consequently, the distribution $q = Q'$ to be Hermitian.
By Theorem \ref{L adjoint} the minimal ope\-rator
$L_{\operatorname{min}}$ is symmetric and one may consider  a problem of
describing (in terms of homogeneous boundary conditions) all
self-adjoint, maximal dissipative, and maximal accumulative extensions of the operator $L_{\operatorname{min}}$.
Let us recall following definition.
\begin{definition}
  Let $L$ be a closed densely defined symmetric operator in a Hilbert space
  $\mathcal{H}$ with equal (finite or infinite) deficient indices.
  The triplet $\left( {H,\Gamma _1 ,\Gamma _2 }\right)$, where $H$
  is an auxiliary Hilbert space and $\Gamma_1$, $\Gamma_2$ are the
  linear mappings of $\operatorname{Dom}(L^*)$ onto $H,$ is called
  a \emph{boundary triplet} of the symmetric operator $L$, if
\begin{enumerate}
  \item  for any $ f,g \in \operatorname{Dom} \left( {L^*} \right)$,
  $$
\left( {L^ *  f,g} \right)_\mathcal{H} - \left( {f,L^ *  g}
\right)_\mathcal{H} = \left( {\Gamma _1 f,\Gamma _2 g} \right)_H  -
\left( {\Gamma _2 f,\Gamma _1 g} \right)_H,
$$
\item for any $ f_1, f_2 \in H$ there is a vector $ f\in
  \operatorname{Dom} \left( {L^*} \right)$ such that $\Gamma _1 f =
  f_1$, $ \Gamma _2 f = f_2$.
\end{enumerate}
\end{definition}

The definition of a boundary triplet implies that $ f \in
\operatorname{Dom} \left( {L} \right)$ if and only if $\Gamma_1f =
\Gamma_2f = 0$. A boundary triplet exists for any symmetric operator
with equal non-zero deficient indices (see \cite{Gorbachuk} and
references therein).
The following
result is crucial for the rest of the paper.

\begin{lemma}\label{baslemm}
  \label{PGZth} Triplet $(\mathbb{C}^{2s}, \Gamma_1, \Gamma_2)$,
  where $\Gamma_1, \Gamma_2$ are the linear mappings
$$
\Gamma_1y := \left( D^{[1]}y(a), -D^{[1]}y(b)\right), \quad \Gamma_2y := \left( y(a), y(b)\right),
$$
from $\operatorname{Dom}(L_{\operatorname{max}})$ onto
$\mathbb{C}^{2s}$ is a boundary triplet for the operator
$L_{\operatorname{min}}$.
\end{lemma}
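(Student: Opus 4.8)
The plan is to verify the two defining properties of a boundary triplet directly, using the tools already assembled in the excerpt. The first property—the abstract Green identity—should follow immediately from Lemma~\ref{Lagrange}. Indeed, since $p$ and $Q$ are Hermitian we have $l^+ = l$, $D^{\{k\}} = D^{[k]}$, and $L^+_{\operatorname{max}} = L_{\operatorname{max}}$, so Green's formula reads
\[
\int_a^b\left(D^{[2]}y\cdot\overline z - y\cdot\overline{D^{[2]}z}\right)dt
= \left(D^{[1]}y\cdot\overline z - y\cdot\overline{D^{[1]}z}\right)\Big|_{t=a}^{t=b}
\]
for all $y,z\in\operatorname{Dom}(L_{\operatorname{max}})$. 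By Theorem~\ref{L adjoint}, $L_{\operatorname{min}}^* = L_{\operatorname{max}}$, so $L_{\operatorname{max}}^* f = -D^{[2]}f = l[f]$. Thus the left-hand side equals $(f, L_{\operatorname{max}}^* g) - (L_{\operatorname{max}}^* f, g)$ up to sign; I would then just expand the right-hand side as
\[
D^{[1]}y(b)\cdot\overline{z(b)} - D^{[1]}y(a)\cdot\overline{z(a)}
- y(b)\cdot\overline{D^{[1]}z(b)} + y(a)\cdot\overline{D^{[1]}z(a)},
\]
and check that this coincides with $(\Gamma_1 y,\Gamma_2 z)_{\mathbb{C}^{2s}} - (\Gamma_2 y,\Gamma_1 z)_{\mathbb{C}^{2s}}$ for the stated choice $\Gamma_1 y = (D^{[1]}y(a), -D^{[1]}y(b))$, $\Gamma_2 y = (y(a), y(b))$. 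This is a short bookkeeping computation: the $\mathbb{C}^{2s}$ inner product splits into the $t=a$ block and the $t=b$ block, and the sign $-1$ in the second component of $\Gamma_1$ is exactly what is needed to convert the sign pattern at the endpoint $b$ in Lagrange's identity into the required bilinear form. I would present this expansion in one display and conclude property~(1).

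For the second (surjectivity) property, I would invoke Lemma~\ref{surject} directly. Given arbitrary $f_1 = (\xi_a, \xi_b)\in\mathbb{C}^{2s}$ and $f_2 = (\eta_a,\eta_b)\in\mathbb{C}^{2s}$ (each $\xi_\bullet,\eta_\bullet\in\mathbb{C}^s$), I want $y\in\operatorname{Dom}(L_{\operatorname{max}})$ with $\Gamma_1 y = f_1$ and $\Gamma_2 y = f_2$; unwinding the definitions, this means $y(a) = \eta_a$, $y(b) = \eta_b$, $D^{[1]}y(a) = \xi_a$, and $D^{[1]}y(b) = -\xi_b$. Applying Lemma~\ref{surject} with the data $\alpha_0 = \eta_a$, $\alpha_1 = \xi_a$, $\beta_0 = \eta_b$, $\beta_1 = -\xi_b$ produces exactly such a function. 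This establishes property~(2) with essentially no work. Finally, I would note for completeness that the deficiency indices are $(2s,2s)$ by Theorem~\ref{L adjoint}, and $\dim\mathbb{C}^{2s} = 2s$, so the auxiliary space has the correct dimension, confirming that the triplet is genuine.

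I do not anticipate a serious obstacle: both halves are near-immediate consequences of Lemmas~\ref{Lagrange} and~\ref{surject}. The only point requiring care is the sign arrangement in property~(1)—one must confirm that placing the minus sign on the $b$-component of $\Gamma_1$ (rather than, say, swapping the roles of $a$ and $b$ or adjusting $\Gamma_2$) produces precisely the skew-symmetric form $(\Gamma_1 f,\Gamma_2 g) - (\Gamma_2 f,\Gamma_1 g)$ and not its negative. This is a matter of tracking which endpoint contributes which sign in Lagrange's identity, and I would write out the two-term-per-endpoint expansion explicitly so the cancellation of signs is transparent to the reader.
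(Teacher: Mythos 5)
Your proposal is correct and follows essentially the same route as the paper: property (1) is obtained from Green's formula (Lemma~\ref{Lagrange}), using that $L_{\operatorname{min}}^*=L_{\operatorname{max}}$ in the Hermitian case, and property (2) follows from Lemma~\ref{surject}. The sign bookkeeping you flag does work out (the minus in the second component of $\Gamma_1$ compensates the endpoint evaluation at $b$), exactly as in the paper's one-display verification.
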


\begin{proof}
  According to
  Theorem \ref{L adjoint}, $L^*_{\operatorname{min}} = L_{\operatorname{max}}$.
  Due to Lemma \ref{Lagrange},
 $$\left( {L_{\operatorname{max}}y,z}
  \right) - \left( y,L_{\operatorname{max}}z\right) =
  \left.\left(y\cdot\overline{D^{[1]}z} -
      D^{[1]}y\cdot\overline{z}\right)\right|^b_a.$$
  But
\begin{align*}
\left( {\Gamma _1 y,\Gamma _2 z} \right) & =
D^{[1]}y(a)\cdot\overline{z(a)} -
D^{[1]}y(b)\cdot\overline{z(b)},\\ \left({\Gamma _2
y,\Gamma _1 z} \right) & = y(a)\cdot\overline{D^{[1]}z(a)}
- y(b)\cdot\overline{D^{[1]}z(b)}.
\end{align*}
This means that condition $1)$ is fulfilled. Condition $2)$ is true due to Lemma \ref{surject}.
\end{proof}
Let $K$ be a linear operator on  $\mathbb{C}^{2s}.$
Denote by $L_K$ the restriction of
  $L_{\operatorname{max}}$ onto the set of functions ${y \in
    \operatorname{Dom}(L_{\operatorname{max}})}$ satisfying the
  homogeneous boundary condition in the canonical form
\begin{equation} \label{rozsh}
 \left( {K - I} \right)\Gamma _1 y +
i\left( {K + I} \right)\Gamma _2 y = 0.
\end{equation}
Similarly,   $L^K$ denotes the restriction of
$L_{\operatorname{max}}$ onto the set of the functions $y \in
\operatorname{Dom}(L_{\operatorname{max}})$ satisfying the boundary condition \begin{equation}
\label{akk_ext}
 \left( K - I \right)\Gamma_{1} y - i\left( K + I \right)\Gamma_{2} y = 0.
\end{equation}
Clearly, $L_K$ and $L^K$ are the extensions of $L$ for any $K$.
Recall that a densely
defined linear operator $T$ on a complex Hilbert space $\mathcal{H}$ is called \emph{dissipative}
(resp. \emph{accumulative}) if \[ \Im \left( Tx, x \right)_\mathcal{H} \geq 0 \quad (\text{resp.}
\leq 0), \quad \text{for all}\quad  x \in \text{Dom} (T) \] and it is called \emph{maximal dissipative}
(resp. \emph{maximal accumulative}) if, in addition, $T$ has no non-trivial
dissipative (resp. accumulative) extensions in $\mathcal{H}.$ Every symmetric ope\-ra\-tor is both dissipative
and accumulative, and every self-adjoint operator is a maximal dissipative and maximal accumulative one.
Lemma \ref{baslemm} together with results of \cite[Ch.~3]{Gorbachuk} leads to
the following description of  dissipative, accumulative and self-adjoint extensions of
$L_{\operatorname{min}}$.

\begin{theorem}\label{adj ext} Every $L_K$ with $K$ being a contracting operator in
$\mathbb{C}^{2s}$,
 is a maximal dissipative extension of   $L_{\operatorname{min}}$.
Similarly every $L^K$ with $K$ being a contracting operator in $\mathbb{C}^{2s}$, is a maximal
accumulative extension of the operator $L_{\operatorname{min}}$. Conversely, for any maximal
dissipative (respectively, maximal accumulative) extension $\widetilde{L}$ of the operator
$L_{\operatorname{min}}$ there exists a contracting operator $K$ such that $\widetilde{L} =
L_K$\,\, (respectively, $\widetilde{L} = L^K$). The extensions $L_K$ and $L^K$ are self-adjoint if
and only if $K$ is a unitary operator on $\mathbb{C}^{2s}$. These correspondences between operators
$\{K\}$ and the extensions $\{\widetilde{L}\}$ are all bijective.
\end{theorem}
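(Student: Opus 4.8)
The plan is to deduce Theorem \ref{adj ext} from the abstract theory of boundary triplets, exactly as packaged in \cite[Ch.~3]{Gorbachuk}. By Lemma \ref{baslemm} the triplet $(\mathbb{C}^{2s},\Gamma_1,\Gamma_2)$ is a boundary triplet for the symmetric operator $L_{\operatorname{min}}$, whose deficiency indices are $(2s,2s)$ by Theorem \ref{L adjoint}; in particular the auxiliary space $H=\mathbb{C}^{2s}$ is finite-dimensional, so every densely defined operator on it is bounded and the notions of contraction and unitary operator are the elementary matrix ones. The abstract correspondence states that proper extensions $\widetilde L$ of $L_{\operatorname{min}}$ (i.e.\ $L_{\operatorname{min}}\subset\widetilde L\subset L_{\operatorname{min}}^*=L_{\operatorname{max}}$) are parametrized by linear relations on $H$; a genuine operator parametrization is obtained by writing the boundary condition through the Cayley-type transform, which is precisely the canonical form (\ref{rozsh}). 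So the first step is to recall the abstract statement: if $(H,\Gamma_1,\Gamma_2)$ is a boundary triplet for $L_{\operatorname{min}}$, then $\widetilde L$ is maximal dissipative iff it is the restriction of $L_{\operatorname{max}}$ to $\{y:(K-I)\Gamma_1y+i(K+I)\Gamma_2y=0\}$ for some contraction $K$ on $H$, self-adjoint iff $K$ is unitary, and the map $K\mapsto\widetilde L$ is a bijection; the maximal accumulative case is the mirror image with the sign of the $i(K+I)\Gamma_2y$ term reversed, which is (\ref{akk_ext}).

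Next I would verify the dissipativity bookkeeping directly on our operators, so that the paper is self-contained modulo \cite{Gorbachuk}. Using the abstract Green identity from the definition of a boundary triplet (condition (1)), for $y\in\operatorname{Dom}(L_K)$ we have
\[
2i\,\Im(L_{\operatorname{max}}y,y)=(L_{\operatorname{max}}y,y)-(y,L_{\operatorname{max}}y)=(\Gamma_1y,\Gamma_2y)_H-(\Gamma_2y,\Gamma_1y)_H.
\]
On the other hand the boundary condition (\ref{rozsh}) gives $\Gamma_1y=$ (something)$\cdot\Gamma_2y$ on the relevant subspace, or more symmetrically one introduces $u:=\Gamma_1y-i\Gamma_2y$ and $v:=\Gamma_1y+i\Gamma_2y$, so that (\ref{rozsh}) reads $Kv=u$, i.e.\ $u=Kv$, and computes
\[
(\Gamma_1y,\Gamma_2y)_H-(\Gamma_2y,\Gamma_1y)_H=\tfrac12\bigl(\|v\|_H^2-\|u\|_H^2\bigr)=\tfrac12\bigl(\|v\|_H^2-\|Kv\|_H^2\bigr).
\]
Hence $\Im(L_Ky,y)\ge0$ for all $y\in\operatorname{Dom}(L_K)$ precisely when $\|Kv\|\le\|v\|$ for all $v$ in the range of $\Gamma_1-i\Gamma_2$ on $\operatorname{Dom}(L_{\operatorname{max}})$; by the surjectivity in condition (2) of the boundary-triplet definition this range is all of $H$, so $L_K$ is dissipative iff $K$ is a contraction. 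Replacing $i$ by $-i$ gives accumulativity of $L^K$ under the same hypothesis on $K$. The self-adjoint case then follows since $L_K$ is symmetric (equivalently both dissipative and accumulative, equivalently $\|Kv\|=\|v\|$ for all $v$) iff $K$ is an isometry, and on the finite-dimensional $H$ an isometry is unitary; a symmetric extension of an operator with deficiency indices $(2s,2s)$ that admits no proper symmetric extension is self-adjoint, which is automatic here because $\dim H<\infty$.

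The remaining step is maximality and the bijectivity of the correspondence. For maximality: if $\widetilde L\supset L_K$ is dissipative then $\widetilde L$ is itself a restriction of $L_{\operatorname{max}}$ (any dissipative extension of a symmetric operator lies inside the adjoint), hence given by some boundary linear relation $\theta$ with $\theta\supset\{(v,Kv)\}$; dissipativity of $\widetilde L$ forces $\theta$ to be contained in the graph of a contraction, and a contraction graph containing the graph of the contraction $K$ must equal it, so $\widetilde L=L_K$. Conversely, given any maximal dissipative $\widetilde L\supset L_{\operatorname{min}}$, the same inclusion $\widetilde L\subset L_{\operatorname{max}}$ plus the computation above shows the boundary relation of $\widetilde L$ is the graph of a contraction $K$, and maximality together with the bijection between proper extensions and boundary relations (condition (2) of the boundary-triplet definition) yields $\widetilde L=L_K$ with this $K$ unique. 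Injectivity of $K\mapsto L_K$ is immediate since $K$ is recovered from $\operatorname{Dom}(L_K)$ via $u=Kv$ with $v$ ranging over all of $H$. I expect the main obstacle to be purely expository rather than mathematical: one must state precisely which version of the abstract correspondence from \cite{Gorbachuk} is being invoked and check that our $\Gamma_1,\Gamma_2$ satisfy its hypotheses (done in Lemma \ref{baslemm}), and be careful with the two sign conventions distinguishing the dissipative condition (\ref{rozsh}) from the accumulative one (\ref{akk_ext}); once the substitution $u=\Gamma_1y-i\Gamma_2y$, $v=\Gamma_1y+i\Gamma_2y$ is in place, everything reduces to the elementary fact that $\|Kv\|\le\|v\|$ for all $v$ is equivalent to $K$ being a contraction, and $=$ to $K$ being unitary.
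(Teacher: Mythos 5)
Your proposal is correct and takes essentially the same route as the paper, whose entire proof consists of combining Lemma~\ref{baslemm} with the abstract boundary-triplet correspondence of \cite[Ch.~3]{Gorbachuk}; the extra verification you supply is sound and only makes explicit what that citation packages. One cosmetic slip: the displayed identity should read $(\Gamma_1y,\Gamma_2y)_H-(\Gamma_2y,\Gamma_1y)_H=\tfrac{i}{2}\bigl(\|v\|_H^2-\|u\|_H^2\bigr)$ (the left-hand side is purely imaginary, so it cannot equal the real quantity $\tfrac12(\|v\|^2-\|u\|^2)$), but after dividing by $2i$ this still gives $\Im(L_Ky,y)=\tfrac14\bigl(\|v\|^2-\|Kv\|^2\bigr)$ and everything downstream is unaffected.
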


\begin{remark}\label{homeo} It follows from
  Theorem \ref{res conv part} and Theorem \ref{adj ext}
   that the mapping $K \to L_K$ is not only bijective but
  also continuous. More accurately, if  contracting operators $K_n$
  converge to an operator $K$, then  $L_{K_n}
\stackrel{R}{\Rightarrow} L_K.$ The converse is
  also true, because the set of contracting operators in the space
  $\mathbb{C}^{2s}$ is a compact set. This means that the mapping
  $$K
  \to \left(L_K - \lambda\right)^{-1}, \quad \operatorname{Im}
  \lambda < 0,$$ is a homeomorphism for any fixed $\lambda.$ Analogous result  is true for $L^K.$
\end{remark}

Now we pass to the description of separated boundary
conditions. Denote by ${f_a}$ the germ of a continuous function $f$
at the point $a$.

\begin{definition}
  The boundary conditions  that define the operator $L \subset
  L_{\operatorname{max}}$ are called \emph{separated} if for
  arbitrary functions $y \in \operatorname{Dom}(L)$ and any $g, h \in
  \operatorname{Dom}(L_{\operatorname{max}})$, such that
$${g_a} = {y_a},\quad  {g_b} = 0,\quad
{h_a} = 0,\quad  {h_b} = {y_b}$$ we have $g, h \in
\operatorname{Dom}(L).$
\end{definition}

\begin{theorem}\label{divided adj}
Let $K$ be a linear operator on  $\mathbb{C}^{2s}.$
Boundary conditions (\ref{rozsh}), (\ref{akk_ext}) defining $L_K$ and $L^K$ respectively are separated if and
only if  $K$ is block diagonal, i.e.,
\begin{equation} \label{separable cond}
K = \left(%
\begin{array}{cc}
  K_a & 0 \\
  0 & K_b \\
\end{array}%
\right),
 \end{equation}
where $K_a, K_b$ are arbitrary $s \times s$ matrices.
\end{theorem}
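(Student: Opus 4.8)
The plan is to analyze the boundary condition (\ref{rozsh}) componentwise with respect to the natural splitting $\mathbb{C}^{2s} = \mathbb{C}^s_a \oplus \mathbb{C}^s_b$ induced by the endpoints $a$ and $b$, using the explicit form of the boundary triplet from Lemma~\ref{baslemm}, namely $\Gamma_1 y = (D^{[1]}y(a), -D^{[1]}y(b))$ and $\Gamma_2 y = (y(a), y(b))$. I will treat the case of $L_K$ (condition (\ref{rozsh})); the argument for $L^K$ (condition (\ref{akk_ext})) is identical up to a sign.

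First I would prove the ``if'' direction. Writing $K$ in the block-diagonal form (\ref{separable cond}), the boundary condition $(K-I)\Gamma_1 y + i(K+I)\Gamma_2 y = 0$ splits into two independent equations, one involving only the data $(D^{[1]}y(a), y(a))$ at the endpoint $a$, namely $(K_a - I)D^{[1]}y(a) + i(K_a + I)y(a) = 0$, and one involving only $(D^{[1]}y(b), y(b))$ at $b$. Now given $y \in \operatorname{Dom}(L_K)$ and $g \in \operatorname{Dom}(L_{\operatorname{max}})$ with $g_a = y_a$ and $g_b = 0$: matching germs at a point of continuity forces $g(a) = y(a)$ and $D^{[1]}g(a) = D^{[1]}y(a)$ (since $D^{[1]}$ is a first-order quasi-derivative, the value at $a$ is determined by the germ of $g$ there together with the ambient coefficients), while $g(b) = 0$ and $D^{[1]}g(b) = 0$. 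Hence $g$ satisfies the $a$-equation because $y$ does, and satisfies the $b$-equation trivially. Thus $g \in \operatorname{Dom}(L_K)$, and symmetrically $h \in \operatorname{Dom}(L_K)$; this is exactly separatedness.

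For the ``only if'' direction I would argue by contraposition: assume $K$ has a nonzero off-diagonal block, and produce $y \in \operatorname{Dom}(L_K)$ and an admissible $g$ (germ-matching at $a$, vanishing germ at $b$) that fails the boundary condition. Using Lemma~\ref{surject}, I can prescribe the four quantities $y(a), D^{[1]}y(a), y(b), D^{[1]}y(b)$ arbitrarily in $\mathbb{C}^s$ and realize them by some $y \in \operatorname{Dom}(L_{\operatorname{max}})$; I choose them to satisfy (\ref{rozsh}) with a ``mixed'' solution vector, i.e. with nonzero components at both endpoints, which is possible precisely because the $2s$-dimensional solution space of the algebraic system $(K-I)u + i(K+I)v = 0$ is not respected by the endpoint splitting when $K$ is not block diagonal. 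Then I pick $g$ with $g_a = y_a$ (so $g(a) = y(a)$, $D^{[1]}g(a) = D^{[1]}y(a)$) but $g(b) = D^{[1]}g(b) = 0$; the $b$-part of (\ref{rozsh}) for $g$ reads $(K+I)$ applied, via the off-diagonal block, to the nonzero $a$-data and this is nonzero for a suitable choice, so $g \notin \operatorname{Dom}(L_K)$, contradicting separatedness.

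The main obstacle I anticipate is making precise and rigorous the claim that germ-equality $g_a = y_a$ forces not only $g(a) = y(a)$ but also $D^{[1]}g(a) = D^{[1]}y(a)$ — that is, that the first quasi-derivative at an endpoint is determined by the germ of the function there. This needs the fact that $D^{[1]}g = pg' - Qg$ and that two $\operatorname{Dom}(L_{\operatorname{max}})$-functions agreeing on a one-sided neighborhood of $a$ have equal classical derivatives there a.e., hence equal $D^{[1]}$ up to the endpoint by the absolute continuity built into $\operatorname{Dom}(L_{\operatorname{max}})$. Once this identification of ``germ data'' with ``endpoint Cauchy data $(y(a), D^{[1]}y(a))$'' is nailed down, both directions reduce to the elementary linear algebra of when a subspace of $\mathbb{C}^{2s}$ defined by $(K-I)\Gamma_1 + i(K+I)\Gamma_2 = 0$ decomposes as a direct sum of a subspace of the $a$-coordinates and a subspace of the $b$-coordinates, which happens exactly when $K$ is block diagonal.
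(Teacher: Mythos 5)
Your proposal follows essentially the same route as the paper: the ``if'' direction by splitting (\ref{rozsh}) into independent $a$- and $b$-equations, and the ``only if'' direction by using surjectivity of the boundary maps to produce a function whose truncation at $b$ violates the condition. However, the crux of the ``only if'' direction is asserted rather than proved. You say you can choose boundary data satisfying $(K-I)u+i(K+I)v=0$ with a ``mixed'' solution vector ``because the solution space is not respected by the endpoint splitting when $K$ is not block diagonal'' --- but that is precisely the statement to be established, not a reason for it. Concretely: if $g_b=0$ and $g_a=y_a$, the second block row of (\ref{rozsh}) applied to $g$ collapses to $K_{21}\bigl(D^{[1]}y(a)+iy(a)\bigr)=0$, so what must be shown is that $D^{[1]}y(a)+iy(a)$ is not confined to $\Ker(K_{21})$ as $y$ ranges over $\operatorname{Dom}(L_K)$. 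The paper closes this by an explicit parametrization: for any $z=(z_1,z_2)\in\mathbb{C}^{2s}$ there is $y_z\in\operatorname{Dom}(L_{\operatorname{max}})$ with $\Gamma_1y_z=-i(K+I)z$ and $\Gamma_2y_z=(K-I)z$; such $y_z$ automatically satisfies (\ref{rozsh}) since $(K-I)(K+I)=(K+I)(K-I)$, and a direct computation gives $D^{[1]}y_z(a)+iy_z(a)=-2iz_1$, which sweeps out all of $\mathbb{C}^s$, forcing $K_{21}=0$ (and symmetrically $K_{12}=0$). You would need this computation, or an equivalent rank argument for the $2s\times 4s$ system, to make your contrapositive work. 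Your treatment of the germ issue --- that $g_a=y_a$ forces both $g(a)=y(a)$ and $D^{[1]}g(a)=D^{[1]}y(a)$ --- matches what the paper uses in (\ref{separated def}) and is fine; note only that the subspace cut out by the boundary condition lives in $\mathbb{C}^{2s}\oplus\mathbb{C}^{2s}$ (pairs $(\Gamma_1y,\Gamma_2y)$), not in $\mathbb{C}^{2s}$ as stated in your closing paragraph.
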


\begin{proof} We consider the operators $L_K$, the case of $L^K$ can be treated in a
similar way.
The assumption ${y_c} = {g_c}$ implies that
\begin{equation}\label{separated def}
y(c) = g(c), \quad (D^{[1]}y)(c) = (D^{[1]}g)(c), \quad c \in [a, b].
\end{equation}
Let  $K$ have the form (\ref{separable cond}).  Then (\ref{rozsh}) can be
written in the form of a system,
\[ \left\lbrace
\begin{aligned}
(K_a - I)D^{[1]}y(a) + i(K_a + I)y(a)& = 0,
\\
 -(K_b - I)D^{[1]}y(b)
+ i(K_b + I)y(b) & = 0.
\end{aligned}
 \right. \]
Clearly these conditions are separated.
Conversely, suppose that boundary conditions   (\ref{rozsh}) are
separated. The matrix $K \in \mathbb{C}^{2s \times 2s}$ can be
written in the form
$$K = \left(%
\begin{array}{cc}
  K_{11} & K_{12} \\
  K_{21} & K_{22} \\
\end{array}%
\right).$$
We need to prove that $K_{12} = K_{21} = 0$.
Let us rewrite (\ref{rozsh}) in the form of
the system
\[ \left\lbrace
\begin{aligned}
( K_{11} - I)D^{[1]}y(a) - K_{12}D^{[1]}y(b) + i( K_{11} + I)y(a) +
iK_{12}y(b) & = 0,\\ K_{21}D^{[1]}y(a) - (K_{22} - I)D^{[1]}y(b) +
iK_{21}y(a) + i(K_{22} + I)y(b)&  = 0.
\end{aligned}
 \right. \]
The fact that the boundary conditions are separated implies that a
 function $g$ such that ${g_a} = {y_a}, {g_b} =
 0$ also satisfies this system.
It follows from (\ref{separated def})  that for any $y \in \operatorname{Dom}(L_K)$
\[ \left\lbrace
\begin{aligned}
&K_{11} \left[D^{[1]}y(a) + iy(a)\right]= D^{[1]}y(a) - iy(a),\\
&K_{21}\left[D^{[1]}y(a) + iy(a)\right] = 0 .
\end{aligned}
 \right. \]
This means that for any $y \in \operatorname{Dom}(L_K)$
\begin{equation} \label{KerK21}
D^{[1]}y(a)
+ iy(a) \in \Ker (K_{21}).
\end{equation}
 For any $z =
(z_1, z_2) \in \mathbb{C}^{2s}$, consider the vectors $-i \left(K +
  I\right)z$ and $\left(K - I\right)z$. Due to  Lemma \ref{baslemm} and
the definition of the boundary triplet, there exists a function $y_z
\in \operatorname{Dom}(L_{\operatorname{max}})$ such that
\begin{equation}\label{rozsh parametric}
\left\lbrace
\begin{aligned}
-i \left(K + I\right)z &  = \Gamma _1 y_z,\\ \left(K - I\right)z & =
\Gamma _2 y_z.
\end{aligned}\right.
\end{equation}
Clearly $y_z$ satisfies  (\ref{rozsh}) and ${y_z \in}$
${\operatorname{Dom}(L_K)}$.  Rewrite (\ref{rozsh
  parametric}) in the form of the system
\[
\left\lbrace
\begin{aligned}
- i(K_{11} + I)z_1 - iK_{12}z_2 & = D^{[1]}y_z(a),
\\ -iK_{21}z_1 -
i(K_{22} + I)z_2 & = -D^{[1]}y_z(b),
\\ (K_{11} - I)z_1 + K_{12}z_2 & =
y_z(a),
\\ K_{21}z_1 + (K_{22} - I)z_2 & = y_z(b).
\end{aligned}
\right.
\]
The first and the third equations of the system above imply that for
any $z_1 \in \mathbb{C}^s$ $$ D^{[1]}y_z(a) + iy_z(a) = -2iz_1.$$
Due to (\ref{KerK21}) we have that $\Ker(K_{21}) = \mathbb{C}^s$ and
therefore $K_{21}=0$. Similarly one can prove that $K_{12} = 0$.
\end{proof}

\begin{remark}
It follows from Lemma \ref{baslemm} and Theorem 1 of \cite{Brook} that
there is a one-to-one correspondence between the generalized
  resolvents $R_\lambda$ of  $L_{\operatorname{min}}$ and the
  boundary-value problems
  $$ l[y] = \lambda y + h, \,\,  \left(
    {K(\lambda) - I} \right)\Gamma _1 y + i\left( {K(\lambda) + I}
  \right)\Gamma _2 y = 0. $$ Here
  $\operatorname{Im}\lambda < 0$, $h \in L_2$, and $K(\lambda)$
  is an operator-valued function on the space $\mathbb{C}^{2s}$,
  regular in the lower half-plane, such that $||K(\lambda)|| \leq
  1$.  This correspondence is given by the identity $$R_\lambda h =
  y,\quad \operatorname{Im}\lambda < 0.$$
\end{remark}



\end{document}